\numberwithin{equation}{section}
\newenvironment{proof}{\noindent {\em {Proof}}.}{$\square$
\medskip}
\newtheorem{definition}{Definition}
\newtheorem{corollary}{Corollary}
\newtheorem{ex}{Example}
\newtheorem{theorem}{Theorem}
\newtheorem{remark}{Remark}
\begin{document}
\title{On the eigenvalues of some signed graphs}
\author{M. Souri$^a$,~F. Heydari$^{a,}$\footnote{Corresponding author.\newline \hspace*{4mm} E-mail addresses: mona.souri@kiau.ac.ir (M. Souri), f-heydari@kiau.ac.ir, \newline faride.heydari@gmail.com (F. Heydari), maghasedi@kiau.ac.ir (M. Maghasedi).}, M. Maghasedi$^a$}
\date{ }

\maketitle \vspace{-9mm}

\begin{center}
\date {}{ \small \textit{$^a$Department of Mathematics, Karaj Branch, Islamic Azad University, Karaj, Iran}}
\end{center}

\begin{abstract}
Let $G$ be a simple graph and $A(G)$ be the adjacency matrix of $G$. The matrix $S(G) = J -I -2A(G)$ is called the Seidel matrix of $G$, where $I$ is an identity matrix and $J$ is a square matrix all of whose entries are equal to 1. Clearly, if $G$ is a graph of order $n$ with no isolated vertex, then the Seidel matrix of $G$ is also the adjacency matrix of a signed complete graph $K_n$ whose negative edges induce $G$. In this paper, we study the Seidel eigenvalues of the complete multipartite graph $K_{n_1,\ldots,n_k}$ and investigate its Seidel characteristic polynomial. We show that if there
are at least three parts of size $n_i$, for some $i=1,\ldots,k$,
then $K_{n_1,\ldots,n_k}$ is determined, up to switching, by its Seidel spectrum.
\end{abstract}

~~~\noindent\textbf{Keywords:} signed graph, eigenvalues, complete multipartite graphs, \newline \hspace*{9mm}Seidel matrix, $S$-determined.\\

~~~\noindent\textbf{MSC(2010):} 05C22; 05C31; 05C50.

\section{Introduction}
Let $G$ be a simple graph with the vertex set $V(G)$ and the edge set $E(G)$. The \textit {order} of $G$ is defined $\vert V(G)\vert $. A \textit{signed graph} $\Gamma$ is an ordered pair $(G,\sigma)$,
where $\sigma : E(G) \longrightarrow \lbrace -1,+1\rbrace$ is a sign function. If all edges of a signed graph are positive (resp., negative),
then we denote it by $(G,+)$ (resp., $(G,-)$). The signed graphs have been studied by many authors, for instance see \cite{Akbari1, Akbari2, Belardo}.

Let $A(G)=(a_{ij})$ be the adjacency matrix of $G$, and let $S(G) = J -I -2A(G)$ be its \textit{Seidel matrix}, where $I$ is an identity matrix and $J$ is a square matrix all of whose entries are equal to 1. The adjacency matrix of a signed graph $\Gamma=(G,\sigma)$ is a square matrix $A(\Gamma)=(a_{ij}^\sigma)$, where $a_{ij}^\sigma=\sigma (v_iv_j)a_{ij} $. As usual, $K_n$ and $\overline{K_n}$ will denote
the complete graph and the null graph of order $n$, respectively. Clearly, if $G$ is a graph of order $n$ and $\Gamma=(K_{n},G^-)$ denotes a signed complete graph $K_n$ in which the edges of $G$ are negative and the edges not in $G$ are positive, then $A(\Gamma)=S(G)$. The polynomial $S_G(\lambda)=\det(\lambda I-S(G))$ is called the {\it Seidel characteristic polynomial} of $G$ and its roots often referred to as the {\it Seidel eigenvalues} or {\it Seidel spectrum} of $G$.

When dealing with Seidel matrices, the concept of {\it Seidel switching} has an important role.
Let $U\subset V(G)$. A Seidel switching with respect to $U$ transforms $G$ to a graph $H$ by deleting the edges between $U$ and $V(G)\setminus U$ and adding all edges between them that were not present in $G$. It is easy to see that two Seidel matrices $S(G)$ and $S(H)$ are similar and so have the same spectrum. Seidel switching is an equivalence relation and two graphs $G$ and $H$ are called \textit{switching equivalent}.
We say that a graph $G$ is {\it Seidel determined, up to switching} (or {\it $S$-determined}) if the only graphs with same Seidel spectrum are switching equivalent to a graph isomorphic to $G$.

The complete multipartite graph $K_{n_1,\ldots,n_k}$ is a graph of order $n=\sum_{i=1}^{k}n_i$. The set of vertices is partitioned into
parts $V_1,\ldots,V_k$ such that $|V_i|=n_i$, for $i=1,\ldots,k$ and an edge joins two vertices if and only if they belong to different parts.
In \cite{berman}, the authors proved that
any graph which has the same Seidel spectrum as a complete $k$-partite graph is switching equivalent to a complete $k$-partite graph, and if the different partition sets sizes are $n_1,\ldots,n_s$, and there are at least three parts of each size $n_i,$ $i=1,\ldots,s$, then it is determined, up to switching, by its Seidel spectrum.
In this paper, we investigate the Seidel characteristic polynomial of the complete multipartite graph $K_{n_1,\ldots,n_k}$. We show that if two complete $k$-partite graphs $K_{n_1,\ldots,n_k}$ and $K_{q_1,\ldots,q_k}$ are Seidel cospectral and $n_i=q_j$ for some $i,j$, $1\leq i,j\leq k$, then
there is a permutation $\pi$ such that $n_i=q_{\pi(i)}$ for $i=1,\ldots,k$. As a corollary, we prove that if there
are at least three parts of size $n_i$, for some $i=1,\ldots,k$,
then $K_{n_1,\ldots,n_k}$ is $S$-determined. Among other results, we find an upper bound for the least Seidel eigenvalue of $K_{n_1,\ldots,n_k}$.

\section{The Seidel characteristic polynomial}

Let $G$ be the complete multipartite graph $K_{n_1,\ldots,n_k}$ of order $n$. The Seidel characteristic polynomial of $G$ was found in \cite{wang}. We include here a different proof by considering the Seidel matrix of $G$ as the adjacency matrix of a signed complete graph $K_n$ whose negative edges induce $G$. First we need the following definition.

\begin{definition}
{\rm(\cite[p.24]{Haemers})}
{\rm
Suppose $A$ is a symmetric real matrix whose rows and columns are indexed by
$X=\{1,\ldots,n\}$. Let $\{X_1,\ldots,X_k\}$ be a partition of $X$ and $A$ be
partitioned according to $\{X_1,\ldots,X_k\}$, that is,\\
\begin{center}
$\begin{bmatrix}
A_{1,1}&\ldots&A_{1,k}\\
\vdots&&\vdots  \\
A_{k,1}&\ldots &A_{k,k}\\
\end{bmatrix},$
\end{center}
where $A_{i,j}$ denote the submatrix of $A$ formed by rows in $X_i$ and the
columns in $X_j$. Let $b_{i,j}$ denote the average row sum of $A_{i,j}$. Then the matrix
$B=(b_{i,j})$ is called the {\it quotient matrix} of $A$ with respect to the given partition. If the row sum of each $A_{i,j}$ is constant, then the partition is called {\it equitable}.
}
\end{definition}

So we have the next theorem.

\begin{theorem}\label{seidel}
Let $G$ be the complete multipartite graph $K_{n_1,\ldots,n_k}$ of order $n$. Then
$$S_G(\lambda)=(\lambda +1)^{n-k} \Big(1+\sum_{i=1}^{k}\frac{n_i}{\lambda+1-2n_i}\Big)\prod_{i=1}^{k}(\lambda+1-2n_i).$$
\end{theorem}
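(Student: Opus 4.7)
The plan is to exploit the equitable partition structure that the parts of $K_{n_1,\ldots,n_k}$ induce on its Seidel matrix, so that $S_G(\lambda)$ factors into an easy eigenvalue-$(-1)$ contribution and a $k\times k$ determinant that can be handled by a rank-one perturbation formula.

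First I would write $S(G)$ in block form according to the partition $V_1,\dots,V_k$. Inside $V_i$ the adjacency submatrix is $0$, so the $(i,i)$ block of $S(G)$ is $J_{n_i}-I_{n_i}$; between $V_i$ and $V_j$ the adjacency submatrix is $J$, so the $(i,j)$ block is $-J_{n_i\times n_j}$. In particular, each diagonal block has constant row sum $n_i-1$ and each off-diagonal block $(i,j)$ has constant row sum $-n_j$, so the partition is equitable. The corresponding $k\times k$ quotient matrix is
\[
B=\mathrm{diag}(2n_1-1,\ldots,2n_k-1)-\mathbf 1\,\mathbf n^{\!\top},
\]
where $\mathbf n=(n_1,\ldots,n_k)^{\!\top}$.

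Next I would separate the spectrum into two pieces. Consider the subspace $W$ of vectors $v$ whose restriction to each $V_i$ sums to zero; it has dimension $\sum(n_i-1)=n-k$. For $v\in W$, every block action $J_{n_i}v|_{V_i}$ and $J_{n_i\times n_j}v|_{V_j}$ vanishes, and the only surviving term is $-I v$, so $S(G)v=-v$. Hence $-1$ is an eigenvalue of $S(G)$ of multiplicity at least $n-k$, and it contributes the factor $(\lambda+1)^{n-k}$ to $S_G(\lambda)$. By the standard equitable-partition argument the remaining $k$ eigenvalues of $S(G)$ are exactly the eigenvalues of $B$, giving
\[
S_G(\lambda)=(\lambda+1)^{n-k}\det(\lambda I-B).
\]

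Finally I would compute $\det(\lambda I-B)$. Since $\lambda I-B=D+\mathbf 1\mathbf n^{\!\top}$ with $D=\mathrm{diag}(\lambda+1-2n_1,\ldots,\lambda+1-2n_k)$, the matrix determinant lemma gives
\[
\det(D+\mathbf 1\mathbf n^{\!\top})=\det(D)\bigl(1+\mathbf n^{\!\top}D^{-1}\mathbf 1\bigr)=\prod_{i=1}^{k}(\lambda+1-2n_i)\Bigl(1+\sum_{i=1}^{k}\frac{n_i}{\lambda+1-2n_i}\Bigr),
\]
which combined with the $(\lambda+1)^{n-k}$ factor yields the stated formula. The only real obstacle is verifying that the $-1$-eigenspace really has dimension $n-k$ (so that no eigenvalues are double-counted between the $W$ piece and the quotient piece); this is handled cleanly by noting that $W$ and the span of indicator vectors of the parts are complementary and $S(G)$-invariant, so the characteristic polynomial factors as a product over these two invariant subspaces.
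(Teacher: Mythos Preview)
Your proposal is correct. The overall architecture---equitable partition, the $(\lambda+1)^{n-k}$ factor coming from vectors that sum to zero on each part, and reduction to the $k\times k$ quotient matrix $B$---is the same as the paper's. The differences lie in the execution of the two pieces. To evaluate $\det(\lambda I-B)$ the paper proceeds by induction on $k$, subtracting the last row from the others and expanding along the first column; you instead recognise $\lambda I-B$ as a diagonal matrix plus a rank-one perturbation and apply the matrix determinant lemma, which is shorter and more conceptual (one should perhaps remark that $D$ is invertible for generic $\lambda$, and then the identity extends by polynomiality). For the factorisation $S_G(\lambda)=(\lambda+1)^{n-k}\det(\lambda I-B)$, the paper exhibits the eigenvectors $e_u-e_v$, then invokes \cite[Lemma~2.3.1]{Haemers} together with a separate check that $-1$ is not an eigenvalue of $B$ when $k>2$ (and treats $k=2$ by the switching equivalence $K_{n_1,n_2}\sim\overline{K_n}$); your invariant-subspace argument---$W$ and $\mathrm{span}\{\mathbf 1_{V_i}\}$ are orthogonal complements, both $S(G)$-invariant---yields the factorisation directly and uniformly in $k$, with no case split. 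Both approaches are standard; yours is a bit slicker.
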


\begin{proof}
{
Let $V_1,\ldots,V_k$ be the parts of $V(K_{n_1,\ldots,n_k})$ and $|V_i|=n_i$ for $i=1,\ldots,k$. The partition $\sqcap=\{V_1,\ldots,V_k\}$ is an equitable partition. By induction on $k$, we prove that the characteristic polynomial of the quotient matrix $B$ of $S(G)$ with respect to $\sqcap$ is as follows:
$$\Big(1+\sum_{i=1}^{k}\frac{n_i}{\lambda+1-2n_i}\Big)\prod_{i=1}^{k}(\lambda+1-2n_i).$$
One can easily see that the assertion is true for $k=2$. Let $\varphi_B(\lambda)=\det(\lambda I-B)$. So we have
$$\varphi_B(\lambda)=\det \begin{bmatrix}
\lambda +1-n_1 &n_2&\ldots&n_{k-1}& n_k\\
n_1 &\lambda +1-n_2 &\ldots&n_{k-1}&n_k  \\
\vdots &\vdots&\ddots &\vdots&\vdots \\
n_1 &n_2&\ldots&\ddots &n_k\\
n_1 &n_2&\ldots&n_{k-1} & \lambda +1-n_k
\end{bmatrix}.$$

If we subtract the last row from the other rows, then
\fontsize{9pt}{9pt}
$$\varphi_B(\lambda)=\det \begin{bmatrix}
\lambda +1-2n_1 &0&\ldots&\ldots&0& -\lambda -1+2n_k\\
0 &\lambda +1-2n_2 & & &&-\lambda -1+2n_k  \\
\vdots &&\ddots& &\bold{0}&\vdots \\
\vdots &\bold{0}&&\ddots &&\vdots \\
0 && & &\lambda +1-2n_{k-1} &-\lambda -1+2n_k\\
n_1 &n_2&\ldots&\ldots&n_{k-1} & \lambda +1-n_k
\end{bmatrix}.$$
\normalsize
Thus
\fontsize{9pt}{9pt}
$$\varphi_B(\lambda)=(\lambda +1-2n_1)\det \begin{bmatrix}
 \lambda +1-2n_2 & & &&-\lambda -1+2n_k  \\
&\ddots& &\bold{0}&\vdots \\
\bold{0}&&\ddots &&\vdots \\
& & &\lambda +1-2n_{k-1} &-\lambda -1+2n_k\\
n_2&\ldots&\ldots&n_{k-1} & \lambda +1-n_k
\end{bmatrix}$$
\vspace{10mm}
$\ \ \ \ \ \ \ \ \ \ \ \ \ \ \ \ \ \ +(-1)^{k+1}n_1\det \begin{bmatrix}
0&\ldots&\ldots&0& -\lambda -1+2n_k\\
\lambda +1-2n_2 & & &&-\lambda -1+2n_k  \\
&\ddots& &\bold{0}&\vdots \\
\bold{0}&&\ddots &&\vdots \\
& & &\lambda +1-2n_{k-1} &-\lambda -1+2n_k\\
\end{bmatrix}.$\\

By induction hypothesis, we have
$$\varphi_B(\lambda)=\Big(1+\sum_{i=2}^{k}\frac{n_i}{\lambda+1-2n_i}\Big)\prod_{i=1}^{k}(\lambda+1-2n_i)+n_1\prod_{i=2}^{k}(\lambda+1-2n_i),$$
and hence the result holds. On the other hand if $u$ and $v$ are in the same part, the vector $e_u-e_v$ whose coordinates on $u, v$ and elsewhere are
respectively 1, $-1$ and 0 is an eigenvector for the eigenvalue $-1$. This explains the factor $(\lambda +1)^{n-k}$. Clearly, two graphs $K_{n_1,n_2}$ and $\overline{K_n}$ are switching equivalent, where $n=n_1+n_2$. Thus two matrices $S(K_{n_1,n_2})$ and $S(\overline{K_n})=A((K_n,+))$ are similar. Therefore, $K_{n_1,n_2}$ has $-1$ as a Seidel eigenvalue with the multiplicity $n-1$. If $k>2$, then $-1$ is not an eigenvalue of $B$. Hence by \cite[Lemma 2.3.1]{Haemers}, the proof is complete.
}
\end{proof}

Suppose that the number of distinct integers of $n_1,\ldots,n_k$ is $s$. Without loss of generality, assume that $n_1,\ldots,n_s$
are distinct. Suppose that $r_i$ is the multiplicity of $n_i$, for $i=1,\ldots,s$.
The complete multipartite graph $K_{n_1,\ldots,n_k}=K_{n_1,\ldots,n_1,\ldots,n_s,\ldots,n_s}$ is also denoted by $K_{r_1.n_1,\ldots,r_s.n_s}$, where $k=\sum_{i=1}^{s}r_i$ and $n=\sum_{i=1}^{s}r_in_i$.
From the previous theorem, we have the next immediate result.

\begin{corollary}\label{cor1}
Let $G$ be the complete multipartite graph $K_{n_1,\ldots,n_k}=K_{r_1.n_1,\ldots,r_s.n_s}$ of order $n$. Then
$$S_G(\lambda)=(\lambda +1)^{n-k} \prod_{i=1}^{s}(\lambda+1-2n_i)^{r_i-1} \Big(\prod_{i=1}^{s}(\lambda+1-2n_i)+\sum_{i=1}^{s}r_in_i \prod_{j\neq i}(\lambda+1-2n_j)\Big).$$
\end{corollary}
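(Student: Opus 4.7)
The plan is to obtain this formula as a direct algebraic consequence of Theorem~\ref{seidel}, simply by bookkeeping the repeated values among $n_1,\ldots,n_k$. By hypothesis the multiset $\{n_1,\ldots,n_k\}$ contains $s$ distinct values, each $n_i$ occurring $r_i$ times, with $\sum_{i=1}^{s}r_i=k$. So the only real work is to rewrite the sum and the product appearing in Theorem~\ref{seidel} by collapsing equal terms and then factor the result.

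First I would group the product: since the factor $(\lambda+1-2n_i)$ appears once for every part of size $n_i$, we have
$$\prod_{i=1}^{k}(\lambda+1-2n_i)=\prod_{i=1}^{s}(\lambda+1-2n_i)^{r_i}.$$
Similarly, grouping equal summands gives
$$\sum_{i=1}^{k}\frac{n_i}{\lambda+1-2n_i}=\sum_{i=1}^{s}\frac{r_i n_i}{\lambda+1-2n_i}.$$
Substituting both into the formula of Theorem~\ref{seidel} yields
$$S_G(\lambda)=(\lambda+1)^{n-k}\Big(1+\sum_{i=1}^{s}\frac{r_i n_i}{\lambda+1-2n_i}\Big)\prod_{i=1}^{s}(\lambda+1-2n_i)^{r_i}.$$

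Second, I would distribute the bracketed factor into the product and pull out the common factor $\prod_{i=1}^{s}(\lambda+1-2n_i)^{r_i-1}$. The ``$1$'' contribution leaves $\prod_{i=1}^{s}(\lambda+1-2n_i)$, and for each term $\frac{r_i n_i}{\lambda+1-2n_i}$ in the sum the division by $(\lambda+1-2n_i)$ drops the exponent of that factor from $r_i$ to $r_i-1$, producing
$$r_i n_i (\lambda+1-2n_i)^{r_i-1}\prod_{j\neq i}(\lambda+1-2n_j)^{r_j}=r_i n_i\prod_{l=1}^{s}(\lambda+1-2n_l)^{r_l-1}\prod_{j\neq i}(\lambda+1-2n_j).$$
Factoring $\prod_{l=1}^{s}(\lambda+1-2n_l)^{r_l-1}$ out of both contributions gives exactly the stated expression.

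There is no real obstacle here; the identity is a purely formal rearrangement of the formula in Theorem~\ref{seidel}. The only point that needs a little care is tracking the exponents when dividing a term by $(\lambda+1-2n_i)$ inside the product, to make sure the remaining power is $r_i-1$ and that the residual uncollapsed product $\prod_{j\neq i}(\lambda+1-2n_j)$ is over the $s$ distinct sizes (not over all $k$ parts). Once that is verified, the corollary follows immediately.
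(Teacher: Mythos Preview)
Your proposal is correct and is exactly the routine algebraic regrouping that the paper has in mind: the paper simply declares the corollary an ``immediate result'' of Theorem~\ref{seidel} without writing out any steps. Your careful tracking of exponents and of the index set for $\prod_{j\neq i}$ fills in precisely the details the paper omits.
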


\begin{remark}{\rm
By \cite[Lemma 3.2(1)]{berman}, the complete $k$-partite graph $K_{n_1,\ldots,n_k}$ has exactly $k-1$ (not necessarily distinct) positive Seidel eigenvalues. Moreover, if $n_1\geq n_2\geq \cdots \geq n_k$ and $\lambda_1\geq \cdots \geq \lambda_{k-1}$ are the positive Seidel eigenvalues of $K_{n_1,\ldots,n_k}$, then by \cite[Lemma 3.2(2)]{berman},
$$2n_k-1\leq \lambda_{k-1}\leq \cdots \leq2n_2-1\leq\lambda_1\leq 2n_1-1.$$
Also if $2<k<n=\sum_{i=1}^{k}n_i$, then
$K_{n_1,\ldots,n_k}$ has exactly two distinct negative Seidel eigenvalues $\lambda_n, -1$ ($\lambda_n<-1$) with multiplicity $1$ and $n-k$, respectively. (By the proof of Theorem \ref{seidel}, if $2<k<n=\sum_{i=1}^{k}n_i$, then the multiplicity of $-1$ is $n-k$.) Note that $S(K_n)=A((K_n,-))$ and $S(K_{n_1,n_2})$ is similar to $S(\overline{K_n})=A((K_n,+))$, where $n=n_1+n_2$.
}
\end{remark}

By the Rayleigh Principle \cite[p.31]{Haemers}, we have the following upper bound for $\lambda_n$.

\begin{theorem}
Let $\lambda_n$ be the least Seidel eigenvalue of the complete multipartite graph $K_{n_1,\ldots,n_k}$ of order $n$. Then
$$\lambda_n\leq \frac{n}{k}-1-\frac{2}{k}\sum_{1\leq i<j\leq k} \sqrt{n_in_j}.$$
\end{theorem}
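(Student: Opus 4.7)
The plan is to apply the Rayleigh Principle directly: $\lambda_n\leq x^{\top}S(G)x/x^{\top}x$ for any nonzero vector $x$, so it suffices to exhibit one well-chosen test vector. Because $\{V_1,\ldots,V_k\}$ is an equitable partition for $S(G)$, the natural candidates are vectors that are constant on each part.

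Accordingly, I would first take $x$ to have the value $x_i$ on $V_i$ and record the quadratic form in general. Since $S(G)=J-I-2A(G)$ and the edges of $K_{n_1,\ldots,n_k}$ are exactly the pairs of vertices lying in different parts, a direct computation yields
$$x^{\top}Jx=\Big(\sum_{i=1}^{k}n_ix_i\Big)^{2},\qquad x^{\top}x=\sum_{i=1}^{k}n_ix_i^{2},\qquad x^{\top}A(G)x=2\sum_{1\leq i<j\leq k}n_in_jx_ix_j.$$

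The crucial step is the choice $x_i=1/\sqrt{n_i}$, which normalizes each part and makes the cross terms collapse to $\sqrt{n_in_j}$. With this choice $x^{\top}x=k$, while expanding $\big(\sum_i\sqrt{n_i}\big)^{2}=n+2\sum_{i<j}\sqrt{n_in_j}$ and combining the three pieces gives
$$x^{\top}S(G)x=n-k-2\sum_{1\leq i<j\leq k}\sqrt{n_in_j}.$$
Dividing by $x^{\top}x=k$ and invoking the Rayleigh Principle then yields exactly the stated upper bound.

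The only nontrivial ingredient is guessing the right test vector. The motivation for $x_i=1/\sqrt{n_i}$ is that the vectors $\chi_{V_i}/\sqrt{n_i}$ are orthonormal, so our $x$ is, up to scale, their sum; equivalently, $x^{\top}S(G)x/x^{\top}x$ is the Rayleigh quotient of the symmetrized quotient matrix (with diagonal entries $n_i-1$ and off-diagonal entries $-\sqrt{n_in_j}$) evaluated at $\mathbf{1}$. Once this vector has been identified, all remaining work is routine algebra.
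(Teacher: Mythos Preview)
Your proof is correct and is essentially the same argument as the paper's: both apply the Rayleigh Principle with the test vector that is $1/\sqrt{n_i}$ on part $V_i$ (equivalently, the all-ones vector for the symmetrized quotient matrix $\tilde B=D^{-1}BD$). Your version is in fact slightly more self-contained, since working directly with $S(G)$ avoids needing to know that $\lambda_n$ is captured by the quotient matrix.
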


\begin{proof}
{Let $V_1,\ldots,V_k$ be the parts of $V(K_{n_1,\ldots,n_k})$ and $|V_i|=n_i$ for $i=1,\ldots,k$. Suppose that
$B$ is the quotient matrix of $S(K_{n_1,\ldots,n_k})$ with respect to the equitable partition $\sqcap=\{V_1,\ldots,V_k\}$.
The matrix $B$ is similar to the symmetric matrix $\tilde{B}=D^{-1}BD$, where $D={\rm diag}(1/\sqrt{n_1},\ldots,1/\sqrt{n_k})$. Now by the Rayleigh quotient, the vector $[1,\ldots,1]$ gives the upper bound $\frac{n}{k}-1-\frac{2}{k}\sum_{1\leq i<j\leq k} \sqrt{n_in_j}$.
}
\end{proof}

Now, we compute the coefficients of the Seidel characteristic polynomial of the complete multipartite graph $K_{n_1,\ldots,n_k}$.

\begin{theorem} {\rm \cite[Theorem 1]{delor}}
The characteristic polynomial of the complete multipartite graph $K_{n_1,\ldots,n_k}$ is
$$\lambda^{n-k} \Big(\lambda^k-\sum_{m=2}^{k}(m-1)\sigma_m \lambda^{k-m}\Big),$$
where $n=\sum_{i=1}^{k}n_i$, and $\sigma_m=\sum_{1\leq j_1<\cdots<j_m\leq k} n_{j_1}\ldots n_{j_m}$ for $m=2,\ldots,k$.
\end{theorem}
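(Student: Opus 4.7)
The plan is to mirror the approach used in the proof of Theorem~\ref{seidel}, replacing the Seidel matrix by the ordinary adjacency matrix. Let $V_1,\ldots,V_k$ be the parts of $K_{n_1,\ldots,n_k}$; then $\sqcap=\{V_1,\ldots,V_k\}$ is an equitable partition for $A(G)$, and the $k\times k$ quotient matrix $B$ satisfies $B_{ij}=n_j$ for $i\ne j$ and $B_{ii}=0$. Compactly, $B=\mathbf{1}\,n^{T}-D$, where $n=(n_1,\ldots,n_k)^{T}$ and $D={\rm diag}(n_1,\ldots,n_k)$.

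First I would compute $\det(\lambda I-B)$. Applying the matrix determinant lemma to $\lambda I+D-\mathbf{1}n^{T}$ gives
$$\det(\lambda I-B)=\prod_{i=1}^{k}(\lambda+n_i)\Big(1-\sum_{i=1}^{k}\frac{n_i}{\lambda+n_i}\Big)=\prod_{i=1}^{k}(\lambda+n_i)-\sum_{i=1}^{k}n_i\prod_{j\ne i}(\lambda+n_j).$$
Next I would expand both products in the elementary symmetric polynomials $\sigma_m=\sigma_m(n_1,\ldots,n_k)$. The key identity is
$$\sum_{i=1}^{k}n_i\,\sigma_m(n_1,\ldots,\widehat{n_i},\ldots,n_k)=(m+1)\sigma_{m+1},$$
which holds because each squarefree monomial of degree $m+1$ in the $n_j$ is produced exactly $m+1$ times on the left (once for each of its factors playing the role of $n_i$). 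Collecting the coefficient of $\lambda^{k-m}$ in the difference then gives $(1-m)\sigma_m$; in particular the coefficient of $\lambda^{k-1}$ vanishes, so
$$\det(\lambda I-B)=\lambda^{k}-\sum_{m=2}^{k}(m-1)\sigma_m\lambda^{k-m}.$$

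Finally, for each pair $u,v$ in the same part $V_i$ the vector $e_u-e_v$ lies in the kernel of $A(G)$, so $0$ is an eigenvalue of $A(G)$ with multiplicity at least $n-k$, and the associated eigenspace is orthogonal to the characteristic vectors of the parts. Invoking \cite[Lemma~2.3.1]{Haemers} exactly as in the Seidel proof patches these $n-k$ zero eigenvalues together with the spectrum of $B$, yielding the claimed factorization $\lambda^{n-k}\bigl(\lambda^{k}-\sum_{m=2}^{k}(m-1)\sigma_m\lambda^{k-m}\bigr)$. The only non-routine step is the symmetric-function identity above; once it is observed by the counting argument, everything else is bookkeeping and the matrix determinant lemma.
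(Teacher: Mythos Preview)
The paper does not give its own proof of this statement; it is quoted from \cite{delor} as a known result and used without argument. So there is nothing in the paper to compare your proof against directly.

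That said, your proof is correct. The equitable-partition reduction to the $k\times k$ quotient $B=\mathbf{1}\,n^{T}-D$ is standard; the matrix determinant lemma applied to $\lambda I+D-\mathbf{1}\,n^{T}$ gives exactly the product formula you write; the identity $\sum_{i} n_i\,\sigma_m(n_1,\ldots,\widehat{n_i},\ldots,n_k)=(m+1)\sigma_{m+1}$ is valid by the counting argument you sketch; and the coefficient bookkeeping then yields $\lambda^{k}-\sum_{m\ge 2}(m-1)\sigma_m\lambda^{k-m}$. The $\lambda^{n-k}$ factor is justified because every vector that sums to zero on each part is annihilated by $A(G)$ (not merely the particular $e_u-e_v$ you mention), so the restriction of $A(G)$ to the complement of the span of the part-indicators is the zero map, and \cite[Lemma~2.3.1]{Haemers} gives the claimed splitting.

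It is worth noting that your route differs from the paper's own proof of the analogous Seidel result, Theorem~\ref{seidel}: there the authors compute $\varphi_B(\lambda)$ by row reduction and induction on $k$, whereas you use a single rank-one update. Your approach is shorter and avoids the induction, at the small cost of importing the matrix determinant lemma; the inductive approach has the pedagogical advantage of being entirely self-contained.
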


\begin{theorem}\label{coef}
Let $G$ be the complete multipartite graph $K_{n_1,\ldots,n_k}$. Then
\begin{center}
$S_G(\lambda)=(\lambda +1)^{n-k} \Big(\lambda^k+(k-n)\lambda^{k-1}+(\binom{k}{2}-(k-1)n)\lambda^{k-2}$
\end{center}
$\ \ \ \ \ \ \ \ \ \ \ \ \ \ \ \ \ \ \ \ \ \ \ \ \ \ \ \ \ \ \ \ \ \ \ \ +\sum_{m=3}^{k}\sum_{i=0}^{m}(-1)^{i-1}2^{i-1}\binom{k-i}{m-i}(i-2)\sigma_i\lambda^{k-m}\Big),$

\vspace{3mm}
\noindent where $\sigma_0=1$, $n=\sigma_1=\sum_{i=1}^{k}n_i$, and $\sigma_i=\sum_{1\leq j_1<\cdots<j_i\leq k} n_{j_1}\ldots n_{j_i}$ for $i=2,\ldots,k$.
\end{theorem}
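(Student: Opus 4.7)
\noindent The plan is to start from the factored form of $S_G(\lambda)$ provided by Theorem~\ref{seidel}. Setting $\mu=\lambda+1$, that formula rewrites as
\[
S_G(\lambda)=(\lambda+1)^{n-k}\bigl(P(\mu)+Q(\mu)\bigr),
\]
where $P(\mu)=\prod_{i=1}^{k}(\mu-2n_i)$ and $Q(\mu)=\sum_{i=1}^{k}n_i\prod_{j\neq i}(\mu-2n_j)$. The whole task is to expand $P(\mu)+Q(\mu)$ first as a polynomial in $\mu$, then as a polynomial in $\lambda$ via the binomial theorem, and to recognize the resulting coefficients as the expression in the statement.

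For $P(\mu)$, Vieta's formulas give immediately
\[
P(\mu)=\sum_{i=0}^{k}(-2)^{i}\sigma_{i}\mu^{k-i}.
\]
For $Q(\mu)$ I would apply Vieta to each factor $\prod_{j\neq i}(\mu-2n_j)$, multiply by $n_i$, and sum over $i$. The one combinatorial step is the elementary identity
\[
\sum_{i=1}^{k} n_i\, e_m\!\bigl(\{n_j:j\neq i\}\bigr)=(m+1)\sigma_{m+1},
\]
which holds because each $(m+1)$-subset of $\{1,\dots,k\}$ contributes exactly $m+1$ times on the left (once for each choice of the distinguished index $i$). This yields
\[
Q(\mu)=\sum_{i=1}^{k}(-2)^{i-1}\, i\,\sigma_{i}\,\mu^{k-i}.
\]
Adding the two series and merging the coefficients of $\mu^{k-i}$ through $(-2)^{i}+i(-2)^{i-1}=(-2)^{i-1}(i-2)$ produces the uniform expression
\[
P(\mu)+Q(\mu)=\sum_{i=0}^{k}(-1)^{i-1}2^{i-1}(i-2)\sigma_{i}\,\mu^{k-i},
\]
with $\sigma_0=1$ and the $i=0$ term supplying the leading $\mu^{k}$.

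The last step is purely mechanical: substitute $\mu=\lambda+1$ and apply $(\lambda+1)^{k-i}=\sum_{m=i}^{k}\binom{k-i}{m-i}\lambda^{k-m}$. Collecting the coefficient of $\lambda^{k-m}$ gives
\[
\sum_{i=0}^{m}(-1)^{i-1}2^{i-1}(i-2)\binom{k-i}{m-i}\sigma_{i},
\]
which matches the general summand in the theorem. Evaluating at $m=0,1,2$ reproduces $1$, $k-n$, and $\binom{k}{2}-(k-1)n$ respectively (the $i=2$ term at $m=2$ vanishes because of the factor $i-2$), which explains why these are displayed separately in the statement.

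I do not anticipate any real obstacle; the derivation is a bookkeeping exercise built on Vieta's formulas and the binomial theorem. The one step that requires a moment of thought is the symmetric-function identity above, and the only source of sign confusion is the bookkeeping between $(-2)^{i-1}$ and $(-1)^{i-1}2^{i-1}$.
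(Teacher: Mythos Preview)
Your proposal is correct and follows essentially the same route as the paper: start from the factorization of Theorem~\ref{seidel}, expand the two pieces $\prod_i(\lambda+1-2n_i)$ and $\sum_i n_i\prod_{j\neq i}(\lambda+1-2n_j)$ in terms of the elementary symmetric functions $\sigma_i$, and then collect powers of $\lambda$. Your substitution $\mu=\lambda+1$ is a tidy organizational device that separates the Vieta step from the change of variable, whereas the paper carries out both at once via the binomial identities $\binom{k}{m}\binom{m}{i}=\binom{k}{i}\binom{k-i}{m-i}$ and $k\binom{k-1}{m-1}\binom{m-1}{i-1}=i\binom{k}{i}\binom{k-i}{m-i}$; the content is the same.
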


\begin{proof}
{By Theorem \ref{seidel}, we have
$$S_G(\lambda)=(\lambda +1)^{n-k} \Big(\prod_{i=1}^{k}(\lambda+1-2n_i)+\sum_{i=1}^{k}n_i\prod_{j\neq i}(\lambda+1-2n_j)\Big),$$
where $n=\sum_{i=1}^{k}n_i$. Clearly, $\prod_{i=1}^{m}(1-2x_i)=\sum_{i=0}^{m}(-1)^{i}2^i\tilde{\sigma}_i$, where $\tilde{\sigma}_0=1$ and $\tilde{\sigma_i}=\sum_{1\leq j_1<\cdots<j_i\leq m} x_{j_1}\ldots x_{j_i}$ for $i=1,\ldots,m$. Now, $\binom{k}{m}\binom{m}{i}=\binom{k}{i}\binom{k-i}{m-i}$ implies that the coefficient of $\lambda^{k-m}$ in $\prod_{i=1}^{k}(\lambda+1-2n_i)$ is
$$\sum_{i=0}^{m}(-1)^{i}2^i\binom{k-i}{m-i}\sigma_i,$$
where $\sigma_0=1$ and $\sigma_i=\sum_{1\leq j_1<\cdots<j_i\leq k} n_{j_1}\ldots n_{j_i}$ for $i=1,\ldots,k$.

Also, $x_1\prod_{i=2}^{m}(1-2x_i)=\sum_{i=1}^{m}(-1)^{i-1}2^{i-1}x_1\tilde{\sigma}_{i-1}$, where $\tilde{\sigma}_0=1$ and $\tilde{\sigma_i}=\sum_{2\leq j_1<\cdots<j_i\leq m} x_{j_1}\ldots x_{j_i}$ for $i=1,\ldots,m-1$. Hence $k\binom{k-1}{m-1}\binom{m-1}{i-1}=i\binom{k}{i}\binom{k-i}{m-i}$ yields that the coefficient of \underline{}$\lambda^{k-m}$ in $\sum_{i=1}^{k}n_i\prod_{j\neq i}(\lambda+1-2n_j)$ is
$$\sum_{i=1}^{m}(-1)^{i-1}2^{i-1}i\binom{k-i}{m-i}\sigma_i.$$
The proof is complete.
}
\end{proof}

In \cite{Akbari2} and \cite{LV}, the Seidel characteristic polynomials of two graphs $K_{n_1,n_2,n_3,n_4}$ and $K_{n_1,n_2,n_3}$ have been studied.
Here, we have the next example.

\begin{ex}{\rm
The Seidel characteristic polynomials of the complete multipartite graphs $K_{n_1,\ldots,n_k}$ for $k=3,4,5$ are respectively as follows:

$$(\lambda +1)^{n-3} \Big(\lambda^3+(3-n)\lambda^2+(3-2n)\lambda+1-n+4\sigma_3\Big),$$

$$(\lambda +1)^{n-4} \Big(\lambda^4+(4-n)\lambda^3+(6-3n)\lambda^2+(4-3n+4\sigma_3)\lambda+1-n+4\sigma_3-16\sigma_4\Big),$$
\\
$(\lambda + 1)^{n-5} \Big(\lambda^5+(5-n)\lambda^4+(10-4n)\lambda^3+(10-6n+4\sigma_3)\lambda^2$\\

$\ \ \ \ \ \ \ \ \ \ \ \ \ \ \ \ \ \ \ \  +(5-4n+8\sigma_3-16\sigma_4)\lambda+1-n+4\sigma_3-16\sigma_4+48\sigma_5\Big),$\\

\noindent where $n=\sum_{i=1}^{k}n_i$, and $\sigma_i=\sum_{1\leq j_1<\cdots<j_i\leq k} n_{j_1}\ldots n_{j_i}$ for $i=3,4,5$.
}
\end{ex}

\begin{theorem}\label{coef2}
Let $G$ be the complete multipartite graph $K_{n_1,\ldots,n_k}=K_{r_1.n_1,\ldots,r_s.n_s}$ of order $n$. Then\\

$S_G(\lambda)=(\lambda +1)^{n-k} \prod_{i=1}^{s}(\lambda+1-2n_i)^{r_i-1} \Big(\lambda^s+$\\

$\ \ \ \ \ \ \ \ \ \ \ \ \ \ \ \ \ \ \ \ \ \ \ +\sum_{m=1}^{s}\sum_{i=0}^{m}(-1)^{i-1}2^{i-1}\binom{s-i}{m-i}(\sum_{l=1}^{s}r_l\sigma_{li}-2\sigma_i)\lambda^{s-m}\Big),$

\vspace{3mm}
\noindent where $\sigma_0=1$, $\sigma_{l0}=0$, $\sigma_{l1}=n_l$, $\sigma_i=\sum_{1\leq j_1<\cdots<j_i\leq s} n_{j_1}\ldots n_{j_i}$, and  \\
$\sigma_{li}=\displaystyle\sum_{\substack{1\leq j_1<\cdots<j_{i-1}\leq s \\ j_1,\ldots,j_{i-1}\neq l}} n_ln_{j_1}\ldots n_{j_{i-1}}$ for $l,i=1,\ldots,s$.
\end{theorem}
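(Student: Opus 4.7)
The plan is to start from Corollary~\ref{cor1}, which already yields the factorization
\[
S_G(\lambda) = (\lambda+1)^{n-k}\prod_{i=1}^{s}(\lambda+1-2n_i)^{r_i-1}\cdot Q(\lambda),
\]
where
\[
Q(\lambda) := \prod_{i=1}^{s}(\lambda+1-2n_i) + \sum_{l=1}^{s}r_l n_l\prod_{j\neq l}(\lambda+1-2n_j)
\]
is monic of degree $s$. Since the prefactor is already in the desired form, the task reduces to writing $Q(\lambda)=\lambda^s+\sum_{m=1}^{s} c_m\,\lambda^{s-m}$ and reading off each coefficient $c_m$, in direct parallel with the proof of Theorem~\ref{coef}.

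For the first summand of $Q(\lambda)$, I would expand $\prod_{i=1}^{s}(\lambda+1-2n_i)$ by choosing which $m$ of the $s$ factors contribute the constant $1-2n_i$ in place of $\lambda$, then expand each such constant product via $\prod_{i\in T}(1-2n_i)=\sum_{T'\subseteq T}(-2)^{|T'|}\prod_{j\in T'}n_j$. Reordering the double sum and applying the identity $\binom{s}{m}\binom{m}{i}=\binom{s}{i}\binom{s-i}{m-i}$ exactly as in Theorem~\ref{coef}, the coefficient of $\lambda^{s-m}$ in this summand equals $\sum_{i=0}^{m}(-1)^{i}\,2^{i}\binom{s-i}{m-i}\sigma_i$.

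For the second summand, I fix $l$ and apply the same expansion to the $(s-1)$-fold product $\prod_{j\neq l}(\lambda+1-2n_j)$, then multiply by $r_l n_l$. Re-indexing by $i=i'+1$ and using the relation $n_l\sum_{T'\subseteq\{1,\ldots,s\}\setminus\{l\},\,|T'|=i-1}\prod_{j\in T'}n_j = \sigma_{li}$, the sum over $l$ becomes $\sum_{i=1}^{m}(-1)^{i-1}\,2^{i-1}\binom{s-i}{m-i}\sum_{l=1}^{s}r_l\sigma_{li}$. Adding the two contributions and rewriting $(-1)^{i}\,2^{i}=-2\cdot(-1)^{i-1}\,2^{i-1}$ pulls out the common factor $(-1)^{i-1}\,2^{i-1}\binom{s-i}{m-i}$ and produces the bracketed expression $\sum_{l}r_l\sigma_{li}-2\sigma_i$, as stated.

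The only delicate point is the $i=0$ boundary term. The convention $\sigma_{l0}=0$ makes the second sum vanish at $i=0$, so extending its lower index to $0$ is harmless; the combined bracket then equals $\sum_{l}r_l\sigma_{l0}-2\sigma_0 = -2$, and multiplication by $(-1)^{-1}\,2^{-1}\binom{s}{m}=-\tfrac12\binom{s}{m}$ recovers $\binom{s}{m}$, matching the $i=0$ contribution from the first summand. Aside from this routine bookkeeping, the argument is a direct elaboration of the method used for Theorem~\ref{coef}, and I do not foresee any serious obstacle.
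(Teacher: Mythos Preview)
Your proposal is correct and follows essentially the same route as the paper: both start from Corollary~\ref{cor1}, compute the coefficients of $\prod_{i=1}^{s}(\lambda+1-2n_i)$ and of $\sum_{l}r_ln_l\prod_{j\neq l}(\lambda+1-2n_j)$ separately by the method of Theorem~\ref{coef}, and then combine them using the convention $\sigma_{l0}=0$. Your treatment of the $i=0$ boundary term is a bit more explicit than the paper's, but otherwise the arguments coincide.
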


\begin{proof}
{By Corollary \ref{cor1}, we have
$$S_G(\lambda)=(\lambda +1)^{n-k} \prod_{i=1}^{s}(\lambda+1-2n_i)^{r_i-1} \Big(\prod_{i=1}^{s}(\lambda+1-2n_i)+\sum_{i=1}^{s}r_in_i \prod_{j\neq i}(\lambda+1-2n_j)\Big),$$
where $n=\sum_{i=1}^{s}r_in_i$. As we did in the proof of Theorem \ref{coef}, one can see that the coefficient of $\lambda^{s-m}$ in $\prod_{i=1}^{s}(\lambda+1-2n_i)$ is
$$\sum_{i=0}^{m}(-1)^{i}2^i\binom{s-i}{m-i}\sigma_i,$$
where $\sigma_0=1$ and $\sigma_i=\sum_{1\leq j_1<\cdots<j_i\leq s} n_{j_1}\ldots n_{j_i}$ for $i=1,\ldots,s$.

Also, $r_1x_1\prod_{i=2}^{m}(1-2x_i)=\sum_{i=1}^{m}(-1)^{i-1}2^{i-1}r_1x_1\tilde{\sigma}_{i-1}$, where $\tilde{\sigma}_0=1$ and $\tilde{\sigma_i}=\sum_{2\leq j_1<\cdots<j_i\leq m} x_{j_1}\ldots x_{j_i}$ for $i=1,\ldots,m-1$. Hence $\binom{s-1}{m-1}\binom{m-1}{i-1}=\binom{s-1}{i-1}\binom{s-i}{m-i}$ implies that the coefficient of \underline{}$\lambda^{s-m}$ in $\sum_{i=1}^{s}r_in_i\prod_{j\neq i}(\lambda+1-2n_j)$ is
$$\sum_{i=1}^{m}(-1)^{i-1}2^{i-1}\binom{s-i}{m-i}(\sum_{l=1}^{s}r_l\sigma_{li}),$$
\noindent where $\sigma_{l1}=n_l$, and
$\sigma_{li}=\displaystyle\sum_{\substack{1\leq j_1<\cdots<j_{i-1}\leq s \\ j_1,\ldots,j_{i-1}\neq l}} n_ln_{j_1}\ldots n_{j_{i-1}}$ for $l,i=1,\ldots,s$.
By setting $\sigma_{l0}=0$, for $l=1,\ldots,s$, the result holds.
}
\end{proof}

\section{$S$-determined complete multipartite graphs}
In \cite{berman}, the authors have considered the complete $k$-partite graphs that are determined, up to switching, by their Seidel spectrum. They also proved that
if the different partition sets sizes are $n_1,\ldots,n_s$, and there are at least three parts of each size $n_i,$ $i=1,\ldots,s$, then the complete multipartite graph is $S$-determined (see \cite[Theorem 3.7]{berman}). Here, we extend this result. First we need the following theorems.

\begin{theorem} {\rm \cite[Theorem 3.3]{berman}}
\label{kpartite}
Let the Seidel spectrum of $G$ be equal to that of the complete $k$-partite graph $K_{n_1,\ldots,n_k}$. Then $G$ is a complete $k$-partite graph, up to switching.
\end{theorem}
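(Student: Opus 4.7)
\bigskip

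\noindent\textbf{Proof proposal.}

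The plan is to use the multiplicity of $-1$ as a Seidel eigenvalue as a rigid structural invariant. From Theorem~\ref{seidel} and the subsequent remark, $K_{n_1,\ldots,n_k}$ has $-1$ as a Seidel eigenvalue with multiplicity exactly $n-k$ (the cases $k\le 2$ being trivial, since $K_{n_1,n_2}$ and $\overline{K_n}$ are switching equivalent). Consequently, under the hypothesis, $\operatorname{rank}(S(G)+I)=k$, and this rank is manifestly invariant under Seidel switching.

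First I would normalise $G$ by switching. Fix $v_0\in V(G)$ and switch at $\{v:S(G)_{v_0,v}=-1\}$; after this operation the $v_0$-row of $S(G)+I$ is the all-ones vector, while the rank is unchanged. Next, partition $V(G)$ into classes $W_1,\ldots,W_t$ by declaring $u\equiv v$ iff rows $u$ and $v$ of $S(G)+I$ are identical. A direct computation shows that two rows of the symmetric $\pm 1$ matrix $S(G)+I$ (with $1$'s on the diagonal) can be linearly dependent only if they are equal or negatives of each other: if $\mathrm{row}_u=c\cdot\mathrm{row}_v$, then the diagonal entries force $c=\pm 1$. After the normalisation, rows cannot be negatives of each other because their $v_0$-column entries are both $+1$. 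Hence distinct classes give linearly independent rows and $t\le\operatorname{rank}(S(G)+I)=k$.

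Now I would read off the combinatorial structure of $G$. For $u,v\in W_i$, evaluating row equality at column $v$ yields $(S+I)_{u,v}=(S+I)_{v,v}=1$, so $uv\notin E(G)$, i.e.\ each $W_i$ is independent. For $u,u'\in W_i$ and $w,w'\in W_j$ with $i\ne j$, the same row equality forces $(S+I)_{u,w}=(S+I)_{u',w'}$, so the bipartite pattern between $W_i$ and $W_j$ is either complete or empty. Thus, setting $\varepsilon_{ij}\in\{\pm 1\}$ to record this, we may write $S(G)+I=MCM^{T}$, where $M$ is the $n\times t$ partition-indicator matrix and $C$ is the $t\times t$ matrix with $C_{ii}=1$, $C_{ij}=\varepsilon_{ij}$. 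A short calculation shows that the nonzero eigenvalues of $S(G)+I$ coincide with those of $C\,\mathrm{diag}(|W_1|,\ldots,|W_t|)$, so the $k$ non-trivial Seidel eigenvalues of $G$ are encoded by $(C,|W_1|,\ldots,|W_t|)$.

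Finally, further Seidel switching at a union of $W_i$'s conjugates $C$ by a diagonal $\pm 1$ matrix $D$: $C\mapsto DCD$. The concluding step is to show that within the switching class of $G$ one can arrange $C=2I_t-J_t$, i.e.\ all $\varepsilon_{ij}=-1$. Once this is done, $G$ is switching equivalent to the complete multipartite graph on parts $W_1,\ldots,W_t$, and matching the multiplicity of $-1$ with Theorem~\ref{seidel} forces $t=k$, completing the argument. The hard part is this last step: one has to verify that the Seidel spectrum of $K_{n_1,\ldots,n_k}$ is incompatible with any switching orbit of $C$ other than that of $2I_k-J_k$. I would approach this by computing $\mathrm{tr}((S(G)+I)^3)=\mathrm{tr}((CD_m)^3)$ and higher power traces, and showing that an invariant of the switching orbit of $C$ (for instance the sign of the product $\varepsilon_{ij}\varepsilon_{jk}\varepsilon_{ki}$ on each triple, i.e.\ the underlying two-graph of $C$) must match that of the all-negative pattern, so that a sequence of switchings brings $C$ to $2I_k-J_k$.
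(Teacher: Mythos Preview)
This theorem is not proved in the present paper; it is quoted from \cite[Theorem~3.3]{berman} and used as a black box. So there is no in-paper argument to compare against, and the question is simply whether your outline stands on its own.

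It does not. There is a minor slip and a major gap. The slip: from ``no two rows are proportional'' you infer that the $t$ distinct rows of $S(G)+I$ are linearly independent; this inference is invalid, and the conclusion is in fact false (take $t=6$, make row~$1$ of $C$ all ones, and let the $-1$ pattern among $\{2,\dots,6\}$ be a path $P_5$: all six rows are distinct yet $\det C=0$). What you get for free is only $t\ge\operatorname{rank}(S(G)+I)=k$. The major gap is the last step, which you yourself flag as ``the hard part.'' After your reduction one has $C=S(H)+I$ for some graph $H$ on $t$ vertices, and you must show that $H$ is switching-equivalent to $K_t$; but generic graphs are not, so the spectral hypothesis has to do serious work here. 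Your plan to recover the two-graph of $C$ from power traces $\operatorname{tr}((CD)^r)$ is only a plan: these traces entangle the unknown block sizes $|W_i|$ with the unknown signs $\varepsilon_{ij}$, and you give no mechanism for separating them. A cleaner entry point is inertia---by Sylvester's law $C$ inherits the signature of $S(G)+I$ and hence has exactly one negative eigenvalue---but turning that observation into ``$C$ is switching-equivalent to $2I-J$'' is precisely the substantive argument you have not supplied, and is essentially what \cite{berman} proves.
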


\begin{theorem} {\rm \cite[Theorem 3.4]{berman}}
\label{nswit}
Let $n_1\geq n_2\geq \cdots \geq n_k\geq 1$ and $q_1\geq q_2\geq \cdots \geq q_k\geq 1$ be two different $k$-tuples, $k\geq 3$, such that $\sum_{i=1}^{k}n_i=\sum_{i=1}^{k}q_i$. Then $K_{n_1,\ldots,n_k}$ and $K_{q_1,\ldots,q_k}$ are not switching equivalent.
\end{theorem}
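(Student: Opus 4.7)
My plan is to prove Theorem \ref{nswit} by a structural analysis: I will show that for $k\geq 3$, any Seidel switching of the complete multipartite graph $G=K_{n_1,\ldots,n_k}$ which yields another complete multipartite graph must be trivial (that is, the switching set equals $\emptyset$ or $V(G)$) and therefore returns $G$ itself. Consequently, no graph in the switching class of $K_{n_1,\ldots,n_k}$ is complete multipartite with a different partition, so $K_{n_1,\ldots,n_k}$ and $K_{q_1,\ldots,q_k}$ cannot be switching equivalent once the sorted tuples differ.

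The first step is to describe the switched graph $H$ explicitly. Fixing parts $V_1,\ldots,V_k$ of $G$ and a switching set $U\subseteq V(G)$, and writing $U_i=U\cap V_i$, the switching rule implies that in $H$ the edges are exactly: (i) the complete bipartite graph between $U_i$ and $V_i\setminus U_i$ inside each $V_i$, and (ii) between distinct parts $V_i$ and $V_j$, the pairs whose endpoints are both in $U$ or both outside $U$. I then pass to the complement $\overline H$ and classify each vertex by its \emph{type} $(i,\alpha)$ with $\alpha\in\{U,\bar U\}$. Two vertices of types $(i,\alpha),(j,\beta)$ are adjacent in $\overline H$ iff either ($i=j$ and $\alpha=\beta$) or ($i\neq j$ and $\alpha\neq \beta$).

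The heart of the argument is that $H$ is complete multipartite exactly when $\overline H$ is a disjoint union of cliques. Since two distinct $U$-types $(i,U),(j,U)$ are never adjacent in $\overline H$, they cannot coexist in one component; yet any nonempty $\bar U$-type $(l,\bar U)$ with $l\neq i,j$ joins them through its full bipartite connections. Setting $A=\{i:U_i\neq\emptyset\}$ and $B=\{i:V_i\setminus U_i\neq\emptyset\}$, this forces the necessary conditions $B\subseteq\{i,j\}$ for every pair of distinct $i,j\in A$, together with the symmetric condition for pairs in $B$. Intersecting over all such pairs, $|A|\geq 3$ forces $B=\emptyset$ (hence $U=V$), and $|B|\geq 3$ forces $U=\emptyset$. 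The remaining range $|A|,|B|\leq 2$, combined with $A\cup B=[k]$ (since every $V_i$ is nonempty), bounds $k\leq 4$ and leaves only finitely many configurations to dispose of by direct inspection.

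The main obstacle is precisely this finite case analysis for $k=3$ and $k=4$: for each admissible pair $(A,B)$ satisfying the necessary containment constraints one still has to certify that $\overline H$ contains a non-clique connected component (typically because two types that are path-connected inside one component fail to be adjacent in $\overline H$). Once all these configurations are eliminated, the conclusion is immediate: any complete multipartite graph in the switching class of $K_{n_1,\ldots,n_k}$ equals $K_{n_1,\ldots,n_k}$ itself, so a graph $K_{q_1,\ldots,q_k}$ with a strictly different sorted tuple cannot belong to this class, proving the theorem.
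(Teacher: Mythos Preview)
This statement is quoted in the paper as Theorem~3.4 of \cite{berman} and is not proved here; the paper uses it only as a cited input to the results of Section~3. There is therefore no in-paper argument to compare your attempt against.

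Your structural approach is correct, and in fact the ``main obstacle'' you flag dissolves once you push your own constraints one step further. From the adjacency rule in $\overline H$ you correctly extract that for every pair of distinct $i,j\in A$ one must have $B\subseteq\{i,j\}$, together with the symmetric statement with the roles of $A$ and $B$ interchanged. Now observe that already $|A|=2$ forces $B\subseteq A$, hence $|A\cup B|\le 2<k$, contradicting $A\cup B=[k]$; the case $|B|=2$ is symmetric; and $|A|,|B|\le 1$ gives $|A\cup B|\le 2<k$ directly. Thus for every $k\ge 3$ only the trivial switchings $U\in\{\emptyset,V(G)\}$ can produce a complete multipartite graph, and no separate inspection of $k=3$ or $k=4$ is needed. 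Your conclusion then follows exactly as you state.
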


By Theorem \ref{kpartite}, since $K_{n_1,n_2}$ and $\overline{K_n}$ (where $n=n_1+n_2$) are switching equivalent (so Theorem \ref{nswit} does not hold for $k=2$), any complete bipartite graph is $S$-determined.
Throughout the rest of this paper, let $\sigma_i(x_1,\ldots,x_k)=\sum_{1\leq j_1<\cdots<j_i\leq k} x_{j_1}\ldots x_{j_i}$, for $i=1,\ldots,k$.

\begin{theorem}
\label{part}
Let two complete $k$-partite graphs $K_{n_1,\ldots,n_k}$ and $K_{q_1,\ldots,q_k}$ be Seidel cospectral. If $n_i=q_j$ for some $i,j$, $1\leq i,j\leq k$, then
there is a permutation $\pi$ such that $n_i=q_{\pi(i)}$ for $i=1,\ldots,k$.
\end{theorem}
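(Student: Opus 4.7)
The plan is to exploit the product-plus-sum shape of the Seidel characteristic polynomial in Theorem \ref{seidel}. After a change of variable and a small algebraic identity, Seidel cospectrality will collapse to an equation between two monic polynomials of degree $k$, and a routine first-order differential relation will force them to coincide as soon as they share a single positive root.

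First I would substitute $y=(\lambda+1)/2$ and set $P_G(y)=\prod_{i=1}^{k}(y-n_i)$. With this substitution, the inner bracket in Theorem \ref{seidel} equals, up to the harmless scalar $2^{k-1}$, the expression $2P_G(y)+\sum_i n_i\prod_{j\ne i}(y-n_j)$. The identity
\[
yP_G'(y)=\sum_i\bigl[(y-n_i)+n_i\bigr]\prod_{j\ne i}(y-n_j)=kP_G(y)+\sum_i n_i\prod_{j\ne i}(y-n_j)
\]
then rewrites the inner bracket as $R_G(y):=yP_G'(y)+(2-k)P_G(y)$, and the analogous $R_H$ is built from $P_H(y)=\prod_i(y-q_i)$. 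Since cospectrality preserves both $k$ (fixed by hypothesis) and $n=\sum n_i=\sum q_i$ (read off from $\deg S_G$), the common factor $(\lambda+1)^{n-k}$ cancels in $S_G(\lambda)=S_H(\lambda)$, leaving $R_G\equiv R_H$ as polynomials in $y$.

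Setting $D(y):=P_G(y)-P_H(y)$, the equality $R_G\equiv R_H$ becomes the linear relation $yD'(y)=(k-2)D(y)$. Matching coefficients $c_i$ in $D(y)=\sum c_i y^i$ forces $(i-k+2)c_i=0$, so $D(y)=Cy^{k-2}$ for some constant $C$; this is consistent with $\deg D\le k-2$, which follows from the agreement of the $y^{k-1}$-coefficients of $P_G$ and $P_H$ (both equal to $-n$). The hypothesis supplies an $a:=n_i=q_j\ge 1$ with $P_G(a)=P_H(a)=0$, hence $Ca^{k-2}=D(a)=0$, and since $a\ne 0$ this forces $C=0$. Therefore $P_G\equiv P_H$, the multisets $\{n_1,\ldots,n_k\}$ and $\{q_1,\ldots,q_k\}$ coincide, and the required permutation $\pi$ exists.

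The only substantive step is spotting the compact reformulation $R_G=yP_G'+(2-k)P_G$; without it, one is stuck comparing the unwieldy coefficient expansion of Theorem \ref{coef} term by term, and it is not at all transparent why a single coincidence $n_i=q_j$ should propagate to the full multiset. Once the first-order form $yD'=(k-2)D$ is visible, the remainder of the argument is essentially a one-line dimension count.
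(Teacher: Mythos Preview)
Your proof is correct, and it takes a genuinely different route from the paper's own argument. The paper proceeds via the explicit coefficient expansion of Theorem~\ref{coef}: Seidel cospectrality forces $\sigma_1(n_1,\dots,n_k)=\sigma_1(q_1,\dots,q_k)$ and $\sigma_i(n_1,\dots,n_k)=\sigma_i(q_1,\dots,q_k)$ for $i=3,\dots,k$ (note $\sigma_2$ is not directly recoverable). Assuming $n_k=q_k$, the paper then uses the recursion $\sigma_i(x_1,\dots,x_k)=x_k\,\sigma_{i-1}(x_1,\dots,x_{k-1})+\sigma_i(x_1,\dots,x_{k-1})$ in a descending induction to show that \emph{all} elementary symmetric functions of the remaining $k-1$ part sizes agree, whence $\prod_{i<k}(x-n_i)=\prod_{i<k}(x-q_i)$.

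Your approach bypasses Theorem~\ref{coef} altogether: the substitution $y=(\lambda+1)/2$ together with the identity $\sum_i n_i\prod_{j\ne i}(y-n_j)=yP_G'(y)-kP_G(y)$ recasts the degree-$k$ factor of $S_G$ as $R_G(y)=yP_G'(y)+(2-k)P_G(y)$, so cospectrality reduces immediately to the linear ODE $yD'=(k-2)D$ for $D=P_G-P_H$. The one-dimensional solution space $D=Cy^{k-2}$ then makes it transparent \emph{why} a single shared positive root suffices to force $D\equiv 0$. This is cleaner and more conceptual than the paper's symmetric-function induction; in particular it explains structurally the otherwise mysterious fact that exactly one coincidence $n_i=q_j$ is enough, while the paper's argument, though elementary, obtains this only after the inductive cascade. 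The trade-off is that the paper's route stays entirely within symmetric-function bookkeeping and requires no differential identity, so it may be more accessible to readers unfamiliar with the $yP'$ trick.
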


\begin{proof}
{Without loss of generality, assume that $n_k=q_k$. Since $K_{n_1,\ldots,n_k}$ and $K_{q_1,\ldots,q_k}$ are Seidel cospectral, by Theorem \ref{coef}, we have
$n_1+\cdots+n_k=q_1+\cdots+q_k$ and $\sigma_i(n_1,\ldots,n_k)=\sigma_i(q_1,\ldots,q_k)$, for $i=3,\ldots,k$. Clearly,  $\sigma_i(n_1,\ldots,n_{k-1})=\sigma_i(q_1,\ldots,q_{k-1})$ for $i=1,k-1$. Assume that $\sigma_i(n_1,\ldots,n_{k-1})=\sigma_i(q_1,\ldots,q_{k-1})$, where $3\leq i\leq k-1$. Now, $\sigma_i(n_1,\ldots,n_k)=\sigma_i(q_1,\ldots,q_k)$ implies that $$n_k\sigma_{i-1}(n_1,\ldots,n_{k-1})+\sigma_i(n_1,\ldots,n_{k-1})=q_k\sigma_{i-1}(q_1,\ldots,q_{k-1})+\sigma_i(q_1,\ldots,q_{k-1}),$$ and hence $\sigma_{i-1}(n_1,\ldots,n_{k-1})=\sigma_{i-1}(q_1,\ldots,q_{k-1})$. Thus we conclude that $\sigma_i(n_1,\ldots,n_{k-1})=\sigma_i(q_1,\ldots,q_{k-1})$, for
$i=1,\ldots,k-1$. Therefore two polynomials $\prod_{i=1}^{k-1}(x-n_i)$ and $\prod_{i=1}^{k-1}(x-q_i)$ have the same coefficients and so there is a permutation $\pi$ such that $n_i=q_{\pi(i)}$ for $i=1,\ldots,k-1$.}
\end{proof}

The next theorem is an extension of \cite[Theorem 3.7]{berman}.

\begin{theorem}
\label{mult}
The complete $k$-partite graph $K_{n_1,\ldots,n_k}=K_{r_1.n_1,\ldots,r_s.n_s}$, where $r_i\geq 3$ for some $i=1,\ldots,s$, is $S$-determined.
\end{theorem}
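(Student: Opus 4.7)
The plan is to reduce Theorem \ref{mult} to a statement about the part sizes of any complete $k$-partite graph Seidel-cospectral with $K_{n_1,\ldots,n_k}$, and then invoke Theorem \ref{part}. Let $H$ have the same Seidel spectrum as $K_{n_1,\ldots,n_k}=K_{r_1.n_1,\ldots,r_s.n_s}$. Theorem \ref{kpartite} guarantees that $H$ is switching equivalent to some $K_{q_1,\ldots,q_k}$; write its distinct part sizes as $m_1,\ldots,m_t$ with respective multiplicities $u_1,\ldots,u_t$. To finish it is enough to produce an index pair with $n_i=m_j$, because then $n_i=q_{j'}$ for some $j'$, so by Theorem \ref{part} the multisets $\{n_1,\ldots,n_k\}$ and $\{q_1,\ldots,q_k\}$ coincide and $K_{q_1,\ldots,q_k}\cong K_{n_1,\ldots,n_k}$.

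Fix an index $i_0$ with $r_{i_0}\geq 3$. By Corollary \ref{cor1} the ``nontrivial'' factor of $S_{K_{n_1,\ldots,n_k}}(\lambda)$ can be rewritten as
$$P(\lambda)=\Big(\prod_{l=1}^{s}(\lambda+1-2n_l)\Big)\Big(1+\sum_{l=1}^{s}\frac{r_ln_l}{\lambda+1-2n_l}\Big),$$
and a direct evaluation yields $P(2n_{i_0}-1)=r_{i_0}n_{i_0}\,2^{s-1}\prod_{l\neq i_0}(n_{i_0}-n_l)\neq 0$; hence in the full Seidel characteristic polynomial of $K_{n_1,\ldots,n_k}$ the value $2n_{i_0}-1$ is a root of multiplicity exactly $r_{i_0}-1\geq 2$. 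The crucial observation is that the analogous polynomial $Q(\lambda)$ attached to $K_{q_1,\ldots,q_k}$ has \emph{only simple} zeros. Indeed, setting $g(\lambda)=1+\sum_{j=1}^{t}u_jm_j/(\lambda+1-2m_j)$, one computes $g'(\lambda)=-\sum_{j}u_jm_j/(\lambda+1-2m_j)^{2}<0$ wherever $g$ is defined, so $g$ is strictly monotonic between consecutive poles and on each unbounded tail; therefore all of its zeros are simple, and since the same evaluation as above shows the zeros of $g$ avoid the poles $2m_j-1$, the roots of $Q(\lambda)=g(\lambda)\prod_{j}(\lambda+1-2m_j)$ are simple as well.

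The theorem then follows by contradiction. Suppose $n_{i_0}\notin\{m_1,\ldots,m_t\}$. Because $n_{i_0}\geq 1$ one has $2n_{i_0}-1\neq -1$, and by assumption $2n_{i_0}-1\neq 2m_j-1$ for every $j$, so in the factorization
$$S_{K_{q_1,\ldots,q_k}}(\lambda)=(\lambda+1)^{n-k}\prod_{j=1}^{t}(\lambda+1-2m_j)^{u_j-1}Q(\lambda)$$
the value $2n_{i_0}-1$ can enter only through $Q$, contributing multiplicity at most $1$ and contradicting the multiplicity $\geq 2$ established in the previous paragraph. Hence $n_{i_0}=m_{j_0}$ for some $j_0$, Theorem \ref{part} applies, and $H$ is switching equivalent to $K_{n_1,\ldots,n_k}$. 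I expect the simple-roots claim for $Q$ to be the main obstacle to write down cleanly: it is just a monotonicity argument, but one must verify that the monotone pieces of $g$ actually cross zero exactly once in each relevant interval and that multiplying by $\prod_{j}(\lambda+1-2m_j)$ neither destroys simplicity nor introduces new roots at the poles.
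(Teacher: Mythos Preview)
Your proof is correct and complete; the monotonicity argument for the simple roots of $Q$ is airtight as written (since $g'(\lambda)<0$ wherever $g$ is defined and $Q(2m_j-1)\neq 0$ for each $j$, the zeros of $Q$ coincide with those of $g$ and are all simple), so your closing worry is unfounded.

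Both you and the paper reduce to Theorem~\ref{part} by exhibiting a common part size, but the routes to that common size differ. The paper uses the interlacing inequalities $2n_{i+1}-1\le\lambda_i\le 2n_i-1$ from \cite[Lemma~3.2]{berman}: with $r_l\ge 3$ consecutive parts equal to $n_l$ (in the sorted list), two consecutive positive Seidel eigenvalues are squeezed to $2n_l-1$, and the same interlacing applied to the sorted parts of $K_{q_1,\ldots,q_k}$ then traps some $2q_j-1$ between them, forcing $q_j=n_l$. Your route instead reads the multiplicity of $2n_{i_0}-1$ directly off the factorisation of Corollary~\ref{cor1} and rules out multiplicity $\ge 2$ on the $K_{q_1,\ldots,q_k}$ side unless $n_{i_0}$ is one of the $m_j$. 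This trades the external interlacing citation for the elementary simple-roots computation, making the argument more self-contained within the paper; the paper's version is shorter once \cite[Lemma~3.2]{berman} is taken for granted. The two are closely related in spirit: the strict monotonicity of $g$ that you use is exactly what produces the interlacing the paper invokes.
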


\begin{proof}
{Suppose that $n_1\geq n_2\geq \cdots \geq n_s$ and $r_l\geq 3$, where $l=1,\ldots,s$. Let $r_0=0$ and $t_l=\sum_{j=0}^{l-1}r_j$. If $\lambda_1\geq \cdots \geq \lambda_{k-1}$ are the positive Seidel eigenvalues of the complete $k$-partite graph $K_{n_1,\ldots,n_k}=K_{r_1.n_1,\ldots,r_s.n_s}$, then by \cite[Lemma 3.2]{berman} we get that
$$2n_l-1\leq \lambda_{t_l+2}\leq 2n_l-1\leq \lambda_{t_l+1}\leq 2n_l-1.$$
Thus $\lambda_{t_l+1}=\lambda_{t_l+2}=2n_l-1$. By Theorem \ref{kpartite}, if a graph $G$ has the same Seidel spectrum, then $G$ is switching equivalent to a complete $k$-partite graph $K_{q_1,\ldots,q_k}$. Assume that $q_1\geq q_2\geq \cdots \geq q_k$. Again by \cite[Lemma 3.2]{berman},
$$\lambda_{t_l+2}\leq 2q_{t_l+2}-1\leq \lambda_{t_l+1}.$$
Hence $2n_l-1=2q_{t_l+2}-1$, and so $n_l=q_{t_l+2}$. Now, by Theorem \ref{part}, the result holds.
}
\end{proof}

\begin{ex}{\rm
Let $n_1\geq n_2\geq \cdots \geq n_{k-2}\geq 1$. The graph $K_{n_1,\ldots,n_{k-2},1,1}$ is $S$-determined. To see this assume that $\lambda_1\geq \cdots \geq \lambda_{k-1}$ are the positive Seidel eigenvalues of $K_{n_1,\ldots,n_{k-2},1,1}$. By \cite[Lemma 3.2]{berman}, $\lambda_{k-1}=1$. If for $q_1\geq q_2\geq \cdots \geq q_k$ the graph $K_{q_1,\ldots,q_k}$ has the same Seidel spectrum, then $2q_k-1\leq 1$. Hence by Theorem \ref{part}, we are done. Similarly, one can deduce that $K_{n_1,\ldots,n_{k-3},2,2,1}$ is $S$-determined.
}
\end{ex}

In \cite{berman}, it is shown that if $\max \{n_1,n_2,n_3\}$ is prime, then the complete tripartite graph $K_{n_1,n_2,n_3}$ is $S$-determined. Let us close this paper by the following result for the complete $4$-partite graphs.

\begin{corollary}\label{cor1}
If $\max \{n_1,n_2,n_3,n_4\}$ is prime, then the complete $4$-partite graph $K_{n_1,n_2,n_3,n_4}$ is $S$-determined.
\end{corollary}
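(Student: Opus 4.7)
My plan is to show that some part size $q_j$ of any presumed cospectral complete $4$-partite graph must equal $n_1$, which then lets Theorem \ref{part} finish the job. Suppose $G$ has the same Seidel spectrum as $K_{n_1,n_2,n_3,n_4}$, and order $n_1 \geq n_2 \geq n_3 \geq n_4$ with $p = n_1$ prime. By Theorem \ref{kpartite}, $G$ is switching equivalent to a complete $4$-partite graph $K_{q_1,q_2,q_3,q_4}$, which I take ordered so that $q_1 \geq q_2 \geq q_3 \geq q_4$. Reading the $k=4$ case of the Example (equivalently, Theorem \ref{coef}), cospectrality is equivalent to
\[ \sum_i n_i = \sum_i q_i, \quad \sigma_3(n_1,\ldots,n_4) = \sigma_3(q_1,\ldots,q_4), \quad \sigma_4(n_1,\ldots,n_4) = \sigma_4(q_1,\ldots,q_4). \]
Note that $\sigma_2$ does not enter, since the coefficient of $\lambda^{k-2}$ is $\binom{k}{2}-(k-1)\sigma_1$ and therefore already determined by the order.

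Next, I would use the eigenvalue bounds from the Remark to pin down the smaller parts on the $q$-side. Since both graphs share the largest positive Seidel eigenvalue $\lambda_1$, the Remark gives $2q_2-1 \leq \lambda_1 \leq 2n_1-1 = 2p-1$, hence $q_2, q_3, q_4 \leq p$. Now $p$ divides $\sigma_4 = q_1 q_2 q_3 q_4$, so by primality $p \mid q_j$ for some $j$. If $j \in \{2,3,4\}$, then $q_j \leq p$ together with $p \mid q_j$ force $q_j = p = n_1$, and Theorem \ref{part} concludes.

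The remaining case is $p \mid q_1$ but $p \nmid q_2 q_3 q_4$; write $q_1 = a p$ with $a \geq 1$. If $a = 1$, then $q_1 = n_1$ and Theorem \ref{part} again finishes, so assume $a \geq 2$. The $\sigma_4$ identity gives $n_2 n_3 n_4 = a\, q_2 q_3 q_4$, and substituting into the $\sigma_3$ identity yields
\[ p\Bigl[(n_2 n_3 + n_2 n_4 + n_3 n_4) - a(q_2 q_3 + q_2 q_4 + q_3 q_4)\Bigr] = (1-a)\, q_2 q_3 q_4. \]
Hence $p \mid (a-1)\, q_2 q_3 q_4$, and since $p \nmid q_2 q_3 q_4$ by assumption, $p \mid a-1$; combined with $a \geq 2$ this forces $a \geq p+1$ and so $q_1 = a p \geq p(p+1)$. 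On the other hand $q_1 \leq \sum_i q_i - 3 = \sum_i n_i - 3 \leq 4p-3$, whence $p^2 - 3p + 3 \leq 0$, which has negative discriminant and no real solutions — a contradiction.

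The step I expect to be the main obstacle is the divisibility manipulation in the third paragraph: noticing that the $\sigma_3$ and $\sigma_4$ equalities, when read modulo $p$, squeeze $a-1$ between $0$ and $\sim 4-3/p$, thereby ruling out every value except $a=1$. Once one observes that $\sigma_2$ drops out of the Seidel characteristic polynomial and that the eigenvalue interlacing of the Remark confines $q_2,q_3,q_4$ below $p$, the rest is a short divisibility check plus a single arithmetic estimate.
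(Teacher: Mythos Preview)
Your argument is correct, and it takes a genuinely different route from the paper's. The paper never invokes the interlacing inequalities from the Remark; instead it argues purely with the relations $\sigma_1,\sigma_3,\sigma_4$. From $\sigma_1$ it first gets the coarse bound $l\le 3$ for the multiplier in $q_1=lp$, and then runs a case analysis: from $\sigma_3$ it deduces $p\mid n_2n_3n_4$, disposes of $n_3=p$ or $n_4=p$ via Theorem~\ref{mult} (three equal parts), and in the remaining case $n_2=p$ splits further into $q_1=l'p^2$ versus $q_1=lp,\ q_2=l'p$, each handled by another short divisibility/size check together with Theorems~\ref{part} and~\ref{mult}.

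Your use of the eigenvalue bound $2q_2-1\le \lambda_1\le 2p-1$ to force $q_2,q_3,q_4\le p$ is the key simplification: it immediately settles the case $p\mid q_j$ for $j\ge 2$ without any appeal to Theorem~\ref{mult}, and in the residual case $p\nmid q_2q_3q_4$ the single congruence $p\mid a-1$ combined with $q_1\le 4p-3$ kills $a\ge 2$ in one stroke. The payoff is a shorter, case-free argument that uses only Theorems~\ref{kpartite} and~\ref{part}; the paper's approach, by contrast, stays entirely within the symmetric-function identities and illustrates how Theorem~\ref{mult} can absorb the ``repeated part'' subcases.
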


\begin{proof}
{Let $n_1=p\geq n_2\geq n_3 \geq n_4$, where $p$ is a prime number. By Theorem \ref{kpartite}, if a graph $G$ is Seidel cospectral with $K_{n_1,n_2,n_3,n_4}$, then $G$ is switching equivalent to a complete $4$-partite graph $K_{q_1,q_2,q_3,q_4}$. Thus by Theorem \ref{coef},
$\sigma_i(n_1,\ldots,n_4)=\sigma_i(q_1,\ldots,q_4)$, for $i=1,3,4$. Clearly, $\sigma_4(n_1,\ldots,n_4)=\sigma_4(q_1,\ldots,q_4)$ implies that $p|q_1q_2q_3q_4$. Without loss of generality, assume that $q_1=lp$ for some positive integer $l$. Hence we have $n_2n_3n_4=lq_2q_3q_4$. Since $\sigma_1(n_1,\ldots,n_4)=\sigma_1(q_1,\ldots,q_4)$, one can conclude that $4p\geq lp+3$ and so $l\leq 3$. If $l=1$, then $q_1=n_1$ and by Theorem \ref{part}, the result holds. Otherwise, $\sigma_3(n_1,\ldots,n_4)=\sigma_3(q_1,\ldots,q_4)$ yields that $p|(l-1)q_2q_3q_4$ and hence $p|n_2n_3n_4$. (Note that if $l=3$, then $p\geq 3$.) If $n_3=p$ or $n_4=p$, then by Theorem \ref{mult}, we are done. Therefore suppose that $n_2=p$. Since $pn_3n_4=lq_2q_3q_4$, with no loss of generality, we have two cases: $q_1=l'p^2$ or $q_1=lp, q_2=l'p$, where $l,l'$ are positive integers. If $q_1=l'p^2$, then $n_3n_4=l'q_2q_3q_4$. Now, $\sigma_3(n_1,\ldots,n_4)=\sigma_3(q_1,\ldots,q_4)$ implies that $p|q_2q_3q_4$. So $p|n_3n_4$, and by Theorem \ref{mult}, the result holds. Finally, assume that $q_1=lp$ and $q_2=l'p$. Thus $n_3n_4=ll'q_3q_4$. Since $2p+n_3+n_4=(l+l')p+q_3+q_4$, one can deduce that $l+l'\leq 3$ which implies that $l=1$ or $l'=1$. So $q_1=p$ or $q_2=p$, and by Theorem \ref{part}, the proof is complete.
}
\end{proof}

\noindent{\bf Acknowledgment.} The authors wish to thank Professor Saieed Akbari for his fruitful comments.

\small

\end{document}